\documentclass[preprint,authoryear]{elsarticle}
\usepackage[T1]{fontenc}
\usepackage[utf8]{inputenc}
\usepackage{tabularx}
\usepackage{supertabular}
\usepackage{type1cm}   
\usepackage{booktabs}
\usepackage[authoryear]{natbib}
\usepackage{lmodern}
\usepackage{setspace}
\usepackage{nicefrac}
\usepackage{amsmath,dsfont}
\usepackage{framed}
\usepackage{adjustbox}
\usepackage{a4wide}
\usepackage{amssymb,amsmath,amsthm}
\usepackage{bbm}
\usepackage{changes}
\usepackage[flushleft]{threeparttable}
\definecolor{dblue}{rgb}{0.21,0.21,0.55}

\usepackage{multirow}
\usepackage{lscape}
\usepackage[english]{babel}
\usepackage{graphicx}
\usepackage{arydshln}
\usepackage{rotating}
\usepackage[flushmargin,hang]{footmisc}
\usepackage{amsmath}
\usepackage{lscape}
\usepackage[pdflinkmargin=5pt,pdfstartview={FitBH -32768},plainpages=false]{hyperref}

\DeclareMathAccent{\verywidehat}{\mathord}{largesymbols}{'144}

\allowdisplaybreaks[3]

\newtheorem{theo}{Theorem}

\newtheorem{lem}{Lemma}

\begin{document}
	\renewcommand*{\thefootnote}{\fnsymbol{footnote}}
	
	\title{Asset-specific limit order microstructure noise: Parameter estimation and empirical evidence}
	\author[1]{Markus Bibinger}
	\author[1]{Adrian Grüber\corref{cor1}}
	\author[2]{Moritz Jirak}
	\cortext[cor1]{Corresponding author: Adrian Grüber, email: {adrian.grueber@uni-wuerzburg.de}}
	\address[1]{Faculty of Mathematics and Computer Science, Institute of Mathematics, University of W\"urzburg} 
	\address[2]{Department of Statistics and Operations Research, University of Vienna}

	%
	%
	\begin{frontmatter}
		%
		%
		\begin{abstract}
			{{\normalsize \noindent 
					The one-sided microstructure noise model for high-frequency quotes from a limit order book is generalized to capture asset-specific noise tail behaviour. Estimation of a noise tail parameter becomes the key step for inference. This is possible based on moments of the high-frequency returns when specifying a parametric noise model. For a Gamma noise distribution, we establish a central limit theorem for 1-dependent observations from the resulting Gamma difference distribution and prove that it is valid also in the convolution model with general semimartingale log-price dynamics. We highlight that the noise tail parameter has important implications for estimating the efficient price and its volatility. An empirical analysis of recent NASDAQ limit order book data demonstrates that asset-specific noise tail parameters are relevant in practice. }}
			
			\begin{keyword}
				Gamma difference distribution \sep High-frequency data \sep One-sided noise\\[.25cm]
				{\it MSC classification:} 62F12, 62G32
			\end{keyword}

		\end{abstract}
		%
	\end{frontmatter}

		\thispagestyle{plain}
	\section{Introduction}
	\label{sec:1}
	In the stochastic boundary model from \cite{BJR}, ask quotes $(Y_i)$ in a limit order book are 
	\begin{align}\label{lomn}Y_i=X_{t_i^n}+\epsilon_i\,,\,i=0,\ldots,n,~~\epsilon_i\stackrel{iid}{\sim}F_{\eta},\epsilon_i\ge 0\,,\end{align}
	where high-frequency observations $(X_{t_i^n})_{0\le i\le n}$ over the time interval $[0,1]$ of a semimartingale $(X_t)$ describe a latent efficient log-price. The standard assumption for the exogenous one-sided \emph{limit order microstructure noise (LOMN)} $(\epsilon_i)$, $\epsilon_i\ge 0$, is that its cumulative distribution function (cdf) $F_{\eta}$ satisfies
	\begin{align}\label{noise_dist}
	F_{\negthinspace\eta}(x) =\eta x\big(1+o(1)\big) ,\;\mbox{as}~x\downarrow 0\,.
	\end{align}
	The main interest is inference on $(X_t)$ and its volatility and jumps. The one-sided noise leads to local minima as key statistics for this. Bid quotes are analogously described by upper-bounded noise and their local maxima are used for inference. The optimal estimation of integrated volatility in \cite{BJR} with rate $n^{-1/3}$ is based on local minima (maxima) of ask (bid) quotes on blocks of lengths $h_n\propto n^{-2/3}$, on that the stochastic order of the movement of the semimartingale boundary $h_n^{1/2}$ balances that of blockwise minima (maxima) of one-sided errors $(nh_n)^{-1}$, both being $n^{-1/3}$. The rate $n^{-1/3}$ is faster than the optimal rate for integrated volatility estimation under classical, centered market microstructure noise (MMN) which is known to be $n^{-1/4}$. The theory for LOMN was advanced by \cite{spot} and \cite{jumps} towards spot volatility estimation and jump tests. It was demonstrated that LOMN enables the identification of smaller jumps compared to MMN with a speed advantage of online jump detection and avoiding pulverization effects which impair the inference under MMN. These improved properties make inference based on LOMN and ask and bid quotes appealing compared to MMN applied to mid-quotes or transaction prices and suggest that LOMN exploits the rich information provided by limit order books more efficiently. This paper contributes an extension of \eqref{noise_dist} that better accounts for asset-specific microstructure features.
	%
	\section{Model with Asset-specific Limit Order Microstructure Noise}
	\label{sec:2}
	It is a known stylized fact of high-frequency financial data that microstructural effects are asset-specific, for instance, more liquid stocks tend to have lower noise and lower noise-to-signal ratios in the MMN framework, see, among others, \cite{MMN} and \cite{MMN2}. As in \cite{MMN2}, we consider a parametric model; however, the one-sided structure of LOMN naturally induces a noise tail parameter as the key quantity for its specification.
	
	The accuracy of statistical methods under LOMN heavily depends on the average number of observations in the vicinity of the semimartingale boundary and hence \eqref{noise_dist}. A natural generalization is a cdf with $F_{\negthinspace \eta,\alpha}(x) =\eta x^{\alpha}\big(1+o(1)\big)$, with \emph{noise tail parameter} $\alpha$. It implies the stochastic order $(nh_n)^{-1/\alpha}$ of blockwise minima (maxima) of one-sided errors, a balanced optimal regime with $h_n\propto n^{-2/(\alpha+2)}$, and results in an optimal rate $n^{-1/(\alpha+2)}$ for integrated volatility estimation. This shows that $\alpha$ becomes the key parameter of the LOMN model, making its estimation the primary and most important step for subsequent inference. In particular, asset-specific values of $\alpha$ allow the model to be tailored to assets with differing microstructure. In case that $\alpha$ is close to 0, rates for inference on $(X_t)$ and its characteristics get even close to the setting without noise, while $\alpha>1$ would result in slower rates than in the standard model of \cite{BJR}.
	
	Noise characteristics under MMN are typically estimated from the returns $(Y_i-Y_{i-1})$, exploiting that $(\epsilon_i-\epsilon_{i-1})$ dominates the high-frequency semimartingale increments, see, e.g., Section\ 2.2 of \cite{TSRV}. However, the tail parameter $\alpha$ of $(\epsilon_i)$ is generally not directly related to that of $(\epsilon_i-\epsilon_{i-1})$ and the one-sided structure is lost when taking differences. While \cite{spot} and \cite{jumps} exploit that the distribution of blockwise minima (maxima) can be well approximated based on the behaviour of $(X_t)$, a nonparametric estimation of $\alpha$ would contrarily require order statistics over sufficiently small blocks such that the noise dominates their distribution. Since this approach is theoretically highly involved and its applicability unclear, we consider a parametric noise model that provides a link between the tail parameter and moments preserving the idea of estimating noise characteristics from $(Y_i-Y_{i-1})$. The most natural candidate is the two-parametric Gamma distribution, $\Gamma(\alpha,\beta)$, with density
	\begin{align}\label{GammaPDF}
	f_{\negthinspace\alpha,\beta}(x) = \frac{\beta^\alpha}{\Gamma(\alpha)} x^{\alpha - 1} e^{-\beta x}\mathbf{1}_{(x>0)}\,,
	\end{align}
	whose cdf admits the following Taylor series at the boundary:
	\begin{align}\label{GammaCDF}F_{\negthinspace\alpha,\beta}(x)=\sum_{k=0}^{\infty}\frac{(-1)^k}{k!(\alpha+k)\Gamma(\alpha)}(x\beta)^{\alpha+k}=\frac{\beta^{\alpha}}{\alpha\Gamma(\alpha)} x^{\alpha}\big(1+o(1)\big),~\mbox{as}~x\downarrow 0\,.\end{align}
	The empirical study demonstrates the adequacy of Gamma-based parametric fits.
	%
	\section{Parameter Estimation}
	\label{sec:3}
	We show in the key Lemma \ref{lemma} that the empirical moments of $(Y_i-Y_{i-1})$ are dominated by noise $(\epsilon_i-\epsilon_{i-1})$, such that $(X_t)$ becomes asymptotically negligible for the estimation of $\alpha$. Hence, consider $Z_1,Z_2 \overset{\text{i.i.d.}}{\sim} \Gamma(\alpha,\beta)$ and $Z=Z_1-Z_2$. The distribution of $Z$ is a special case of a Gamma difference distribution (\cite{klar}), which is embedded in the class of variance Gamma distributions (\cite{vargamma}). The distribution does not belong to an exponential family and no finite-dimensional sufficient statistic exists for large sample sizes. Therefore, parameter estimation is intrinsically challenging. Although the model is differentiable in quadratic mean and hence LAN, maximum likelihood estimation in these models is known to be unstable; see Section\ 3.2 of \cite{vargamma} for details and examples. We construct a \emph{method of moments} for this problem. All moments exist and for all $p\in\mathds{N}$:
	\begin{align}\label{moments}
	\mu_{\negthinspace p}:=\mathds{E}[\lvert Z\lvert^p] &= \frac{\Gamma(p+1)}{\Gamma\big(\tfrac{p}{2}+1\big)}\frac{\Gamma\big(\alpha + \tfrac{p}{2}\big)}{\Gamma(\alpha)}\frac{1}{\beta^p} ;
	\end{align}
	see (1.21) in \cite{forrester}. A simple moment estimator based on even moments $p=2,4$ requires only knowledge of the moments of the Gamma distribution and admits an explicit closed-form solution. However, we find that an estimator based on $p=1$ and $p=2$ achieves a substantially lower variance. The identity
	\begin{align}\mu_1^2=g(\alpha)\cdot\mu_2,~\text{with}~g(\alpha):=\frac{2\big(\Gamma\big(\alpha + \tfrac{1}{2}\big)\big)^2}{\pi\Gamma(\alpha)\Gamma(\alpha+1)}
	\end{align}
	yields, given that the inverse $g^{-1}$ exists, 
	\begin{align}
	\hat\alpha:=g^{-1}(\hat\mu_1^2/\hat\mu_2).\label{mm}
	\end{align}
	The function $g:(0,\infty)\to(0,2/\pi)$ is non-negative, differentiable, and strictly increasing, since
	\begin{align*}
	g^\prime(\alpha)= \left(\log(g(\alpha))\right)^\prime \cdot g(\alpha)= g(\alpha)\left(2\psi(\alpha + 1/2) - \psi(\alpha) - \psi(\alpha + 1)\right)>0,
	\end{align*}
	with the Digamma Function $\psi(\alpha) = \frac{\partial}{\partial \alpha} \log \Gamma(\alpha)$, whose strong concavity yields
	\begin{equation*}
	\psi\left(\frac{\alpha}{2} + \frac{\alpha +1}{2}\right)> \frac{1}{2}\psi(\alpha) + \frac{1}{2}\psi(\alpha +1).
	\end{equation*}
	Since we have with Stirling's formula that
	\begin{align*}
	\lim_{\alpha \downarrow 0 }g(\alpha ) &= \lim_{\alpha\downarrow 0}\frac{2(\Gamma(1/2))^2}{\pi\Gamma(\alpha)\Gamma(1)} =	\lim_{\alpha \downarrow 0 }2\alpha =0,\\
	\lim_{\alpha \to \infty} g(\alpha)  &= \lim_{\alpha \to\infty}\frac{2}{\pi}\frac{\Gamma(\alpha + 1/2)}{\Gamma(\alpha)}\frac{\Gamma(\alpha+1/2)}{\Gamma(\alpha +1)}= \frac{2}{\pi},
	\end{align*}
	\eqref{mm} has a unique solution whenever $(\hat{\mu}_1)^2/\hat{\mu}_2 < 2/\pi$. We show asymptotic normality of the estimator in Theorem \ref{theorem} and establish asymptotic confidence intervals. An estimator of $\beta$ is readily obtained by plugging in $\hat\alpha$ in \eqref{moments}.
	\begin{lem}\label{lemma}
		For $\epsilon_i\overset{\text{i.i.d.}}{\sim}\Gamma(\alpha,\beta)$, and $(X_t)$ a semimartingale of finite quadratic variation, with $\Delta_i^n \epsilon=\epsilon_i-\epsilon_{i-1}$ and $\Delta_i^n X=X_{t_i^n}-X_{t_{i-1}^n}$ when $\sup_i|t_i^n-t_{i-1}^n|=O(n^{-1})$, it holds for $p=1$ and $p=2$ that
		\[
		\frac{1}{n}\sum_{i=1}^n \big|\Delta_i^n \epsilon+\Delta_i^n X\big|^p
		=
		\frac{1}{n}\sum_{i=1}^n |\Delta_i^n \epsilon|^p
		+ o_{\mathbb{P}}(n^{-1/2})\,.
		\]
	\end{lem}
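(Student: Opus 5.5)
The plan is to treat $p=2$ and $p=2$'s cross term by conditioning on the path of $(X_t)$, and to handle $p=1$ by a linearisation plus a small remainder. Throughout I use that the noise is exogenous, i.e.\ $(\epsilon_i)$ is independent of $(X_t)$. This is implicit in the model \eqref{lomn} and should be stated explicitly. Finite quadratic variation together with $\sup_i|t_i^n-t_{i-1}^n|=O(n^{-1})$ gives $\sum_{i=1}^n(\Delta_i^n X)^2=O_{\mathbb P}(1)$. All bounds below are derived conditionally on $\mathcal F^X$ and then transferred to unconditional $o_{\mathbb P}$ statements by a conditional Markov/Chebyshev argument.

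\emph{Case $p=2$.} Expand $|\Delta_i^n\epsilon+\Delta_i^nX|^2=(\Delta_i^n\epsilon)^2+2\Delta_i^n\epsilon\,\Delta_i^nX+(\Delta_i^nX)^2$.
\begin{itemize}
\item The last term contributes $n^{-1}O_{\mathbb P}(1)=o_{\mathbb P}(n^{-1/2})$.
\item Cauchy--Schwarz only gives $O_{\mathbb P}(n^{-1/2})$ for the cross term, so I exploit cancellation instead. Conditionally on $\mathcal F^X$, the sum $S_n=\sum_i\Delta_i^n\epsilon\,\Delta_i^nX$ has mean zero.
\item The sequence $(\Delta_i^n\epsilon)$ is $1$-dependent with $|\mathrm{Cov}(\Delta_i^n\epsilon,\Delta_j^n\epsilon)|\le 2\mathrm{Var}(\epsilon_1)$. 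Using $|ab|\le (a^2+b^2)/2$, this yields
\[\mathrm{Var}(S_n\mid\mathcal F^X)\le 6\,\mathrm{Var}(\epsilon_1)\sum_i(\Delta_i^nX)^2=O_{\mathbb P}(1).\]
\item Hence $S_n=O_{\mathbb P}(1)$ and $n^{-1}S_n=o_{\mathbb P}(n^{-1/2})$.
\end{itemize}

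\emph{Case $p=1$.} The crude bound $\big||a+b|-|a|\big|\le|b|$ fails: it gives only $n^{-1}\sum|\Delta_i^nX|=O_{\mathbb P}(n^{-1/2})$. I therefore write
\[|a+b|-|a|=b\,\mathrm{sign}(a)+R(a,b),\qquad |R(a,b)|\le 2|b|\,\mathbf 1_{\{|a|\le|b|\}},\]
with $a=\Delta_i^n\epsilon$ and $b=\Delta_i^nX$.
\begin{itemize}
\item \emph{Linear term.} $\Delta_i^n\epsilon$ is a Gamma difference and hence symmetric and continuous, so $\mathrm{sign}(\Delta_i^n\epsilon)$ has mean zero. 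It is bounded and $1$-dependent, so the same conditional-variance argument as for $p=2$ gives $\sum_i\Delta_i^nX\,\mathrm{sign}(\Delta_i^n\epsilon)=O_{\mathbb P}(1)$.
\item \emph{Small-ball bound.} For the remainder I need a small-ball estimate for $Z=\epsilon_1-\epsilon_0$. Conditioning on $\epsilon_0$ gives $\mathbb P(|Z|\le x)\le\sup_y\mathbb P(\epsilon_1\in[y-x,y+x])$. The Gamma density $f_{\alpha,\beta}$ in \eqref{GammaPDF} is bounded by $Cx^{\alpha-1}$ near $0$ and is bounded away from $0$. Hence $\mathbb P(|Z|\le x)\le C x^{\gamma}$ for $x\le1$, with $\gamma=\min(\alpha,1)>0$, and trivially $\le1$ for $x>1$.
\item \emph{Remainder.} By independence,
\[\mathbb E\Big[\sum_i|R_i|\,\Big|\,\mathcal F^X\Big]\le 2C\sum_i|\Delta_i^nX|^{1+\gamma}\ \ (\text{plus terms with }|\Delta_i^nX|>1).\]
The terms with $|\Delta_i^nX|>1$ are bounded by $2\sum_i(\Delta_i^nX)^2=O_{\mathbb P}(1)$.
\item \emph{Hölder.} With exponents $2/(1+\gamma)$ and $2/(1-\gamma)$,
\[\sum_i|\Delta_i^nX|^{1+\gamma}\le n^{(1-\gamma)/2}\Big(\sum_i(\Delta_i^nX)^2\Big)^{(1+\gamma)/2}=O_{\mathbb P}\big(n^{(1-\gamma)/2}\big).\]
Dividing by $n$ gives $O_{\mathbb P}(n^{-(1+\gamma)/2})=o_{\mathbb P}(n^{-1/2})$.
\end{itemize}

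The main obstacle is this remainder term for $p=1$. Its control depends on two ingredients: the anti-concentration of the noise difference near zero, which must be uniform in the conditioning value $\epsilon_0$ because the density of $Z$ may blow up at $0$ when $\alpha\le1/2$, and the strictly positive gain $\gamma$. Transferring the conditional bounds to unconditional $o_{\mathbb P}$ statements is routine: for $\delta,K>0$ one bounds $\mathbb P(\cdot>\delta n^{-1/2})$ by the conditional Markov or Chebyshev inequality on the event $\{\sum_i(\Delta_i^nX)^2\le K\}$, plus the probability of the complementary event.
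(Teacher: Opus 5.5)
Your proof is correct, but it takes a different route from the paper's, mainly for $p=1$.

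\textbf{The $p=1$ decomposition.} The paper uses deterministic thresholds. It splits along $A_i=\{|\Delta_i^n\epsilon|>n^{-c}\}$ and $B_i=\{|\Delta_i^nX|>n^{-c-\delta}\}$ with $c+\delta<1/2$, and treats three regions:
\begin{itemize}
\item on $A_i^c$ it uses $\mathbb{E}|\Delta_i^nX|\,\mathbb{P}(|\Delta_i^n\epsilon|\le n^{-c})$, with the sharper small-ball exponent $2\alpha\wedge1$ taken from the Gamma-difference density;
\item on $A_i\cap B_i^c$ it linearises via $\mathrm{sgn}(\Delta_i^n\epsilon)\,\Delta_i^nX$, much as you do;
\item on $B_i$ it bounds crudely by $n^{c+\delta-1}\sum_i(\Delta_i^nX)^2$.
\end{itemize}
Your exact identity $|a+b|-|a|=b\,\mathrm{sign}(a)+R$, with $R$ supported on $\{|a|\le|b|\}$, works with the random threshold $|\Delta_i^nX|$ instead. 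A single small-ball estimate plus H\"older then replaces the two-threshold bookkeeping, and no tuning of $c$ and $\delta$ is needed. Your cruder uniform-in-$\epsilon_0$ exponent $\alpha\wedge1$ suffices, since any positive exponent works.

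\textbf{Conditioning on $\mathcal F^X$.} The more substantive difference is that you argue conditionally on $\mathcal F^X$ throughout, including for $p=2$. The paper's step 1 uses $\mathbb{E}|\Delta_i^nX|=O(n^{-1/2})$, and its $p=2$ variance computation uses $\mathbb{E}[\Delta_i^nX\,\Delta_j^nX]$. Both tacitly require integrability of the price increments, which a general semimartingale of finite quadratic variation need not have without a localisation step. Step 1 is also where the paper needs the mesh condition $O(n^{-1})$. Your conditional Markov/Chebyshev arguments use only independence and tightness of $\sum_i(\Delta_i^nX)^2$. They therefore cover the lemma at its stated generality and need only that the mesh tends to zero.

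What the paper's route buys is a fully unconditional computation and a sharper small-ball exponent. Neither is needed for the $o_{\mathbb P}(n^{-1/2})$ claim.
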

	\begin{proof}
	$p=1$: For $0<c<1/2$, set $A_i=\{|\Delta_i^n \epsilon|>n^{-c}\}$. We decompose
	\[
	R_n := \frac{1}{n}\sum_{i=1}^n \big(|\Delta_i^n \epsilon+\Delta_i^n X|-|\Delta_i^n \epsilon|\big)=\frac{1}{n}\sum_{i=1}^n \big(\ldots\big)(\mathbf{1}_{A_i^c}+\mathbf{1}_{A_i})=R_n^{(1)}+R_n^{(2)}.
	\]
	1.\ Based on the convolution integral, or the expansion of the density in Eq.\ (4) of \cite{vargamma}, we get that
	\[\mathbb{P}\big(|\Delta_i^n \epsilon|\le x\big)=O\big(x^{2\alpha\wedge 1}\big),~\mbox{as}~x\downarrow 0\,.\]
	Using this, the triangle inequality and independence of $(X_t)$ and $(\epsilon_i)$, the bound
	\begin{align*}
		\mathbb{E}\big[|R_n^{(1)}|\big]\hspace*{-.05cm}\le \hspace*{-.05cm}\frac{1}{n}\hspace*{-.05cm}\sum_{i=1}^n\mathbb{E}\big[|\Delta_i^n X|\mathbf{1}_{A_i^c}\big]\hspace*{-.05cm}\le\hspace*{-.05cm} \frac{1}{n}\hspace*{-.05cm}\sum_{i=1}^n \mathbb{E}\big[|\Delta_i^n X|\big] \mathbb{P}\big(|\Delta_i^n \epsilon|\le n^{-c}\big)\hspace*{-.05cm}=\hspace*{-.05cm}O\big(n^{-1/2-c(2\alpha\wedge 1)}\big)
	\end{align*}
	yields with Markov's inequality that $R_n^{(1)}=o_{\mathbb{P}}(n^{-1/2})$.\\
	2.\ For $\delta>0$ with $c+\delta<1/2$, define \(B_i:=\{|\Delta_i^n X|>n^{-c-\delta}\}\).
	On $A_i\cap B_i^c$,
	\(
	|\Delta_i^n X|\le n^{-c-\delta}<n^{-c}<|\Delta_i^n \epsilon|,
	\)
	hence
	\[
	|\Delta_i^n X+\Delta_i^n \epsilon|-|\Delta_i^n \epsilon|
	=
	U_i\,\Delta_i^n X,
	\qquad
	U_i:=\mathrm{sgn}(\Delta_i^n \epsilon),
	\]
	and since $(U_i)$ is centered, $1$-dependent, and independent of $(X_t)$:
	\[
	\frac{1}{n}\sum_{i=1}^n
	\big(\cdots\big)\mathbf{1}_{A_i\cap B_i^c}
	=
	\frac{1}{n}\sum_{i=1}^n
	U_i\,\Delta_i^n X\,\mathbf{1}_{A_i\cap B_i^c}=O_{\mathbb P}(n^{-1})\,.
	\]
	On $B_i$, the triangle inequality and a simple estimate yield
	\[\frac{1}{n}\Big|\sum_{i=1}^n
	\big(|\Delta_i^n X+\Delta_i^n \epsilon|-|\Delta_i^n \epsilon|\big)
	\mathbf{1}_{ B_i}\Big|\hspace*{-.05cm}\le \hspace*{-.05cm}\frac{1}{n}\hspace*{-.05cm}\sum_{i=1}^n|\Delta_i^n X|
	\mathbf{1}_{ B_i}\hspace*{-.05cm}\le n^{c+\delta-1}\hspace*{-.05cm}\sum_{i=1}^n|\Delta_i^n X|^2\hspace*{-.05cm}=\hspace*{-.05cm}O_{\mathbb{P}}(n^{-1+c+\delta}),
	\]
	such that the last two estimates yield $R_n^{(2)}=o_{\mathbb{P}}(n^{-1/2})$.\\ 
	$p=2$: This simpler case readily follows from the decomposition
	\begin{align*}
		\frac{1}{n}\sum_{i=1}^n \big(\Delta_i^n \epsilon+\Delta_i^n X\big)^2&=\frac{1}{n}\sum_{i=1}^n \big(\Delta_i^n \epsilon\big)^2+\frac{1}{n}\sum_{i=1}^n \big(\Delta_i^n X\big)^2+\frac{2}{n}\sum_{i=1}^n \Delta_i^n \epsilon\, \Delta_i^n X\\
		&=\frac{1}{n}\sum_{i=1}^n \big(\Delta_i^n \epsilon\big)^2+O_{\mathbb{P}}(n^{-1}),
	\end{align*}
	where we use that $(X_t)$ has finite quadratic variation and that the cross term has variance
	\begin{align*}
		\text{Var}\left(\frac{2}{n}\sum_{i=1}^n \Delta_i^n \epsilon\, \Delta_i^n X\right)=\frac{4}{n^2}\sum_{\substack{1\le i,j\le n\\ |i-j|\le 1}}^n \mathds{E}\big[\Delta_i^n \epsilon\, \Delta_j^n \epsilon\big] \mathds{E}\big[ \Delta_i^n X\,\Delta_j^n X\big]=O(n^{-2}),
	\end{align*}
	by independence of $(X_t)$ and $(\epsilon_i)$ and the i.i.d.\ property of $(\epsilon_i)$ and is centred, such that Chebyshev's inequality yields the result.
	\end{proof}
	The lemma shows that estimator \eqref{mm} works in our model \eqref{lomn} even with quite general semimartingales $(X_t)$ without further restrictions on their jumps, volatility and drift characteristics. Observation times need not to be equidistant, the condition $\sup_i|t_i^n-t_{i-1}^n|=O(n^{-1})$ is standard when considering inference on $(X_t)$. We use it in 1.\ of the proof for $p=1$, the other steps even work for $\sup_i|t_i^n-t_{i-1}^n|\to 0$.
	\begin{theo}\label{theorem}
		The estimator \eqref{mm} applied to the data \eqref{lomn} satisfies
		\begin{align}\label{clt}
			\sqrt{n}\big(\hat{{\alpha}} - \alpha \big)  \overset{d}{\to}\mathcal{N}\big( 0,\operatorname{AVAR}(\alpha)\big)\,,
		\end{align}
		with $\operatorname{AVAR}(\alpha)$ given in \eqref{avar}.
	\end{theo}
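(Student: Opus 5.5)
By Lemma \ref{lemma}, the empirical moments $\hat\mu_p=\frac1n\sum_{i=1}^n|Y_i-Y_{i-1}|^p$, $p=1,2$, computed from the data \eqref{lomn} differ from the pure-noise moments $\tilde\mu_p:=\frac1n\sum_{i=1}^n|\Delta_i^n\epsilon|^p$ only by $o_{\mathbb P}(n^{-1/2})$. By Slutsky's lemma it therefore suffices to prove \eqref{clt} with $\hat\mu_p$ replaced by $\tilde\mu_p$. The semimartingale drops out entirely, and the problem reduces to a CLT for the stationary, $1$-dependent sequence $W_i:=(|\Delta_i^n\epsilon|,(\Delta_i^n\epsilon)^2)^\top$. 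Each $\Delta_i^n\epsilon$ has the law of $Z=Z_1-Z_2$ from Section \ref{sec:3}, and all its moments are finite by \eqref{moments}.

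\textbf{Step 1 (bivariate CLT).} I apply the CLT for $m$-dependent stationary sequences (Hoeffding--Robbins) with $m=1$:
\[
\sqrt n\big((\tilde\mu_1,\tilde\mu_2)^\top-(\mu_1,\mu_2)^\top\big)\overset{d}{\to}\mathcal N(0,\Sigma),\qquad \Sigma=\operatorname{Cov}(W_1,W_1)+\operatorname{Cov}(W_1,W_2)+\operatorname{Cov}(W_2,W_1).
\]
Finite second moments of $W_1$ hold because $\mathbb E|Z|^4<\infty$, so the theorem applies. The lag-zero entries follow directly from \eqref{moments}, namely $\mu_2-\mu_1^2$, $\mu_3-\mu_1\mu_2$ and $\mu_4-\mu_2^2$.

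\textbf{Step 2 (lag-one covariances; main obstacle).} The work lies in the lag-one terms $\mathbb E[|\epsilon_1-\epsilon_0|^p|\epsilon_2-\epsilon_1|^q]$ for $p,q\in\{1,2\}$, since consecutive differences share $\epsilon_1$.

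The case $p=q=2$ is elementary: expand the polynomial and use the Gamma moments $\mathbb E\epsilon^k=\Gamma(\alpha+k)/(\Gamma(\alpha)\beta^k)$. I will do the same for the mixed term $\mathbb E[(\epsilon_1-\epsilon_0)^2|\epsilon_2-\epsilon_1|]$.

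For the absolute values I condition on $\epsilon_1=x$. This gives
\[
\mathbb E[|\epsilon_1-\epsilon_0|^p|\epsilon_2-\epsilon_1|^q]=\int_0^\infty h_p(x)h_q(x)f_{\alpha,\beta}(x)\,dx,\qquad h_p(x):=\mathbb E|x-\epsilon_0|^p .
\]
Here $h_1(x)=2\mathbb E[(x-\epsilon_0)^+]-x+\alpha/\beta$ is expressible through incomplete Gamma functions. The remaining integral $\int h_1^2 f_{\alpha,\beta}$ will likely not have a closed form in elementary terms. I would write it as a one-dimensional integral, or a hypergeometric expression, that is positive and finite, and define $\Sigma$ and thus \eqref{avar} through it. Scaling shows that $\beta$ enters only as a factor $\beta^{-(p+q)}$.

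\textbf{Step 3 (delta method).} Set $\phi(m_1,m_2)=g^{-1}(m_1^2/m_2)$. Section \ref{sec:3} shows that $g$ is a strictly increasing differentiable bijection onto $(0,2/\pi)$ with $g'(\alpha)>0$, so $g^{-1}$ is differentiable at $\mu_1^2/\mu_2=g(\alpha)\in(0,2/\pi)$. Consequently $\mathbb P(\hat\mu_1^2/\hat\mu_2<2/\pi)\to1$ and $\hat\alpha$ is well defined with probability tending to one. The gradient is
\[
\nabla\phi(\mu_1,\mu_2)=\frac{1}{g'(\alpha)}\Big(\frac{2\mu_1}{\mu_2},-\frac{\mu_1^2}{\mu_2^2}\Big),
\]
which yields $\operatorname{AVAR}(\alpha)=\nabla\phi^\top\Sigma\nabla\phi$. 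This expression is free of $\beta$, as it must be, because the ratio $\mu_1^2/\mu_2$ is scale invariant. Combining the delta method with the Slutsky reduction gives \eqref{clt}.
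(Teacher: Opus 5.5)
Your proposal is correct and follows the same route as the paper. It reduces to pure noise via Lemma~\ref{lemma}, applies a CLT to the stationary $1$-dependent sequence $(|\Delta_i^n\epsilon|,(\Delta_i^n\epsilon)^2)$ with lag-one covariances obtained by conditioning on the shared middle variable, and finishes with the delta method, whose gradient simplifies to $\big(2/\mu_1,-1/\mu_2\big)/(\log g)'(\alpha)$ and so reproduces exactly the weights in \eqref{avar}.

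The only difference is that the paper goes on to evaluate $\int h_1^2 f_\alpha$ and the mixed terms $\mathbb{E}[|Z_2-Z_1|Z_1^k]$ in closed form. It uses $x^pf_\alpha=\alpha(\alpha+1)\cdots(\alpha+p-1)f_{\alpha+p}$, $F_{\alpha+1}=F_\alpha-f_{\alpha+1}$, $\int F^2\,\mathrm{d}F=1/3$ and $f_{\alpha+1}f_\alpha\propto f_{2\alpha,2}$, and the last of these produces the $I_{1/3}(\alpha,2\alpha)$ term. You instead leave $\Sigma_{11}$ as a one-dimensional integral. This suffices for the theorem, since \eqref{avar} is expressed through the $\Sigma_{ij}$, but it does not yield the paper's explicit formula for the first factor.
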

	\begin{proof}
	Since for $Z_1\sim \Gamma(\alpha,\beta)$, it holds that $\beta Z_1\sim \Gamma(\alpha, 1)$, it suffices to consider $Z_i\overset{\text{i.i.d.}}{\sim} \Gamma(\alpha,1)$ in this proof. We establish the bivariate central limit theorem 
	\begin{align}\label{cltb}
		\sqrt{n}\left(\begin{pmatrix}\frac{1}{n}\sum_{i=1}^{n} \lvert Z_i - Z_{i-1}\lvert \\[.1cm] \frac{1}{n}\sum_{i=1}^{n}\lvert Z_i - Z_{i-1}\lvert ^2	\end{pmatrix} - \begin{pmatrix}\mu_1 \\[.1cm] \mu_2	\end{pmatrix}\right) \overset{d}{\to} \mathcal{N}\left(0,\Sigma\right)\,,
	\end{align}
	where $\Sigma=(\Sigma_{i,j})_{1\le i,j\le 2}$ has the generic illustration
	\begin{align*}
		\Sigma_{11} &= \text{Var}\left(\lvert  Z_2- Z_1\lvert \right) + 2\text{Cov}(\lvert Z_2 -Z_1 \lvert, \lvert Z_3 - Z_2\lvert),\\
		\Sigma_{22} &= \text{Var}\left(\lvert  Z_2 - Z_1\lvert^2 \right) + 2\text{Cov}(\lvert  Z_2 - Z_1 \lvert^2, \lvert  Z_3 - Z_2\lvert^2),\\
		\Sigma_{12}=\Sigma_{21} &= \text{Cov}(\lvert Z_2 - Z_1\lvert,\lvert Z_2 - Z_1\lvert ^2)+ 2 \text{Cov}(\lvert Z_3-Z_2 \lvert, \lvert Z_2 - Z_1\lvert ^2).
	\end{align*}
	A first trick is \(Z_i - Z_{i-1}=\tilde Z_i - \tilde Z_{i-1}\), where $\tilde Z_i=Z_i-\mathds{E}[Z_i]$. For $\Sigma_{22}$ this yields with moments of the $\Gamma(\alpha,1)$ distribution that $\Sigma_{22}=4 \mathds{E}[\tilde Z_1^4]=12\alpha(\alpha+2)$. The terms $\text{Var}(\vert  Z_2- Z_1\vert)=\mu_2-\mu_1^2$, and
	\[\text{Cov}(\lvert Z_2 - Z_1\lvert,\lvert Z_2 - Z_1\lvert ^2) =\mu_3-\mu_1\cdot \mu_2 = \frac{4(\alpha+1)\Gamma(\alpha + 1/2)}{\sqrt{\pi} \Gamma(\alpha)}\,\]
	are readily computed inserting the moments \eqref{moments} with $\beta=1$. For the two remaining covariance terms, we use the tricks that:
	\begin{align}\label{trick17}x^p f_{\negthinspace \alpha}(x)&=
		\left(\prod_{k=0}^{p-1} (\alpha + k)\right) f_{\negthinspace \alpha+p}(x),~\mbox{for}~p\in\mathds{N},\\
		\label{trick18}F_{ \alpha+1}(x)&=F_{\alpha}(x)-f_{\alpha+1}(x)\,,
	\end{align}
	writing $f_{\negthinspace \alpha}=f_{\negthinspace \alpha,1}$ and $F_{\negthinspace \alpha}=F_{\negthinspace \alpha,1}$. This is used to simplify integrals as
	\begin{align}
		&\notag \int_{0}^{\infty} \lvert z_2 - z_1 \lvert f_{\negthinspace \alpha}(z_1) \mathrm{d}z_1 = \int_{0}^{z_2}(z_2 - z_1) f_{\negthinspace \alpha}(z_1)\mathrm{d}z_1+ \int_{z_2}^{\infty}(z_1 - z_2) f_{\negthinspace \alpha}(z_1) \mathrm{d}z_1 =\\
		&\notag =z_2 F_{\negthinspace \alpha}(z_2)  - \alpha F_{\negthinspace \alpha+1}(z_2) + \alpha (1-F_{\negthinspace \alpha+1}(z_2)) - z_2 ( 1-F_{\negthinspace \alpha}(z_2)) \\
		&= z_2 (2F_{\negthinspace \alpha} (z_2) -1) - \alpha (2F_{\negthinspace \alpha+1}(z_2) -1)\,.\label{inner}
	\end{align}
	Conditioning on $Z_2$, and using independence of $Z_1$ and $Z_3$, we obtain 
	\begin{align}\label{inserted}
		\mathds{E}[\lvert Z_2 -Z_1 \lvert \lvert Z_3 - Z_2\lvert] &= \int_{0}^{\infty}\hspace*{-0.05cm}\int_{0}^{\infty} \hspace*{-0.05cm}\int_{0}^{\infty}\lvert z_2 -z_1 \lvert\cdot \lvert z_3 - z_2\lvert f_{\negthinspace \alpha}(z_1)f_{\negthinspace \alpha}(z_2) f_{\negthinspace \alpha}(z_3) \mathrm{d}z_1 \mathrm{d}z_2 \mathrm{d}z_3\\
		&=\int_{0}^{\infty} \left(\int_{0}^{\infty} \lvert z_2 - z_1 \lvert f_{\negthinspace \alpha}(z_1)\mathrm{d}z_1\right)^2 f_{\negthinspace \alpha}(z_2) \mathrm{d}z_2\,.
	\end{align}
	Inserting \eqref{inner} for the inner integral, the most complex term is
	\begin{align*}
		\int_0^\infty\hspace*{-.1cm}
		\big(F_{\negthinspace\alpha}(x)\big)^2
		f_{\negthinspace\alpha+2}(x)\,\mathrm{d}x&=
		\left[
		\big(F_{\negthinspace\alpha}(x)\big)^2
		F_{\negthinspace\alpha+2}(x)
		\right]_0^\infty
		-
		2\int_0^\infty\hspace*{-.1cm}
		F_{\negthinspace\alpha}(x)
		F_{\negthinspace\alpha+2}(x)
		f_{\negthinspace\alpha}(x)\,\mathrm{d}x\\
		&=1
		-
		2\int_0^\infty\hspace*{-.1cm}
		F_{\negthinspace\alpha}(x)
		\Bigl(
		F_{\negthinspace\alpha}(x)
		-
		f_{\negthinspace\alpha+1}(x)
		-
		f_{\negthinspace\alpha+2}(x)
		\Bigr)
		f_{\negthinspace\alpha}(x)\,\mathrm{d}x\\
		&=1/3
		+
		2\int_0^\infty\hspace*{-.1cm}
		F_{\negthinspace\alpha}(x)
		\Bigl(
		f_{\negthinspace\alpha+1}(x)
		+
		f_{\negthinspace\alpha+2}(x)
		\Bigr)
		f_{\negthinspace\alpha}(x)\,\mathrm{d}x\,,
	\end{align*}
	using partial integration and that for any cdf $F$ and $V\sim F$, $F(V)$ is uniformly distributed on $(0,1)$, with second moment 1/3.
	For one remaining term, we use the identity
	\[
	f_{\negthinspace\alpha+1}(x)f_{\negthinspace\alpha}(x)
	=
	\frac{\Gamma(2\alpha)}
	{\alpha 2^{2\alpha}\Gamma(\alpha)^2}
	f_{2\alpha,2}(x),
	\]
	such that with \(I_x(a,b)\) the regularized incomplete beta function:
	\[
	\int_0^\infty\hspace*{-.1cm}
	F_{\negthinspace\alpha}(x)
	f_{\negthinspace\alpha+1}(x)
	f_{\negthinspace\alpha}(x)\,\mathrm{d}x
	=
	\frac{\Gamma(2\alpha)}
	{\alpha 2^{2\alpha}\Gamma(\alpha)^2}
	\mathbb{P}\!\left(Z_1\leq \frac{Z_2+Z_3}{2}\right)=\frac{\Gamma(2\alpha)}
	{\alpha 2^{2\alpha}\Gamma(\alpha)^2}I_{1/3}(\alpha,2\alpha).
	\]
	The second remaining term can be reduced similarly using
	\[
	f_{\negthinspace\alpha+2}(x)
	f_{\negthinspace\alpha}(x)
	=
	\frac{\alpha}{\alpha+1}
	\big(f_{\negthinspace\alpha+1}(x)\big)^2 \,.
	\]
	This is, however, not required, since this term is compensated by another one and drops out with parts of the term
	\[
	\int_{0}^{\infty}\hspace*{-.1cm} F_{\alpha}(x)F_{\negthinspace\alpha+1}(x) f_{\alpha+1}(x) \,\mathrm{d}x = \frac{1}{3} + \int_{0}^\infty\hspace*{-.1cm} F_{\alpha}(x) \big(f_{\negthinspace\alpha+1}(x)\big)^2 \,\mathrm{d}x - \int_{0}^\infty\hspace*{-.1cm} \big(f_{\negthinspace\alpha+1}(x)\big)^3 \,\mathrm{d}x.
	\] 
	The last remaining technically more involved term can be simplified to
	\[
	\int_{0}^{\infty}\hspace*{-.1cm}(F_{\negthinspace\alpha+1}(x))^2 f_{\negthinspace\alpha}(x) \,\mathrm{d}x = 1/3 + \int_{0}^{\infty}\hspace*{-.1cm}(f_{\negthinspace\alpha+1}(x))^2 f_{\negthinspace\alpha}(x) \,\mathrm{d}x  - 2 \int_{0}^{\infty}\hspace*{-.1cm}F_{\negthinspace\alpha}(x) f_{\negthinspace\alpha+1}(x) f_{\negthinspace\alpha}(x) \,\mathrm{d}x.
	\]
	
	The remaining terms of the form $\int_{0}^{\infty}\hspace*{-.1cm}F_{\negthinspace\alpha+k}(x) f_{\negthinspace\alpha+l}(x) \,\mathrm{d}x$ for arbitrary $k,l \in\mathbb N$ can be calculated analogously using partial integration, \eqref{trick18} and that $F(V)$ is uniformly distributed on $(0,1)$ for $V\sim F$. Collecting all terms in \eqref{inserted} yields
	\[
	\mathbb{E}\bigl[|Z_2-Z_1||Z_3-Z_2|\bigr]
	=
	\frac{\alpha}{3}
	+
	\frac{4\Gamma(3\alpha)}
	{3^{3\alpha-1}\Gamma(\alpha)^3}
	+
	\frac{\Gamma(2\alpha)}
	{2^{2\alpha-2}\Gamma(\alpha)^2}
	\left(
	2I_{1/3}(\alpha,2\alpha)-1
	\right)\,,
	\]
	what results in
	\begin{align}
		\Sigma_{11}
		=
		\frac{8\alpha}{3}
		-
		\frac{12\Gamma(\alpha+\frac{1}{2})^2}
		{\pi\Gamma(\alpha)^2}
		+
		\frac{8\Gamma(3\alpha)}
		{3^{3\alpha-1}\Gamma(\alpha)^3}
		+
		\frac{\Gamma(2\alpha)}
		{2^{2\alpha-3}\Gamma(\alpha)^2}
		\left(
		2I_{1/3}(\alpha,2\alpha)-1
		\right).
	\end{align}
	For $\Sigma_{12}$, we insert \eqref{inner} and get
	\begin{align*}
		\mathbb E[\lvert Z_2 -Z_1 \lvert \cdot Z_1] &= \alpha(\alpha+1) \int_{0}^\infty(2F_{\alpha}(x) - 1)f_{\alpha+2}(x) \,\mathrm{d}x - \alpha^2\int_{0}^{\infty}(2F_{\alpha+1}(x)-1)f_{\alpha+1}(x)  \,\mathrm{d}x \\
		&= 2(2\alpha+1)\cdot\frac{\Gamma(2\alpha)}{2^{2\alpha}\Gamma(\alpha)^2},
	\end{align*}
	as well as
	\begin{align*}
		&\mathbb E[\lvert Z_2 -Z_1 \lvert \cdot Z_1^2] = \alpha(\alpha+1)(\alpha+2) \int_{0}^\infty(2F_{\alpha}(x) - 1)f_{\alpha+3}(x) \,\mathrm{d}x -\\
		& - \alpha^2(\alpha+1)\int_{0}^{\infty}(2F_{\alpha+1}(x)-1)f_{\alpha+2}(x)  \,\mathrm{d}x=2(\alpha+2)(2\alpha +1) \frac{\Gamma(2\alpha)}{2^{2\alpha}\Gamma(\alpha)^2},
	\end{align*}
	resulting in
	\begin{align}
		\Sigma_{12}=
		4(\alpha+1)
		\frac{\Gamma\left(\alpha+\frac{1}{2}\right)}
		{\sqrt{\pi}\Gamma(\alpha)}
		+
		4(\alpha+2)
		\frac{\Gamma(2\alpha)}
		{2^{2\alpha}\Gamma(\alpha)^2}=
		4(3\alpha+4)
		\frac{\Gamma(2\alpha)}
		{2^{2\alpha}\Gamma(\alpha)^2}.
	\end{align}
	Based on this covariance structure, and since the existence of moments readily implies a Lyapunov condition, a classical central limit
	theorem, e.g., \cite{berk}, applies to linear combinations of $\sum_i|Z_i-Z_{i-1}|$ and $\sum_i|Z_i-Z_{i-1}|^2$ with 1-dependent addends. Hence, by Cram\'er--Wold's device, we conclude \eqref{cltb}. Standard applications of the delta method yield \eqref{clt} with
	\begin{align}\label{avar}\operatorname{AVAR}(\alpha)=
		\left(
		\frac{\pi\Gamma(\alpha)^2}
		{\Gamma(\alpha+\frac{1}{2})^2}\Sigma_{11}
		-
		\frac{\sqrt{\pi}\Gamma(\alpha)}
		{\alpha\Gamma(\alpha+\frac{1}{2})}\Sigma_{12}
		+
		\frac{\Sigma_{22}}{4\alpha^2}
		\right)
		\left(\left(\log(g(\alpha))\right)^{\prime}
		\right)^{-2}\,,
	\end{align}
	where $\left(\log(g(\alpha))\right)^{\prime}=2\psi(\alpha \hspace*{-0.05cm}+\hspace*{-0.05cm} 1/2) \hspace*{-0.05cm}-\hspace*{-0.05cm} \psi(\alpha) \hspace*{-0.05cm}-\hspace*{-0.05cm} \psi(\alpha \hspace*{-0.05cm}+ \hspace*{-0.05cm}1)$ and the first factor becomes
	\begin{align*}
		\frac{8\pi\alpha\Gamma(\alpha)^2}
		{3\Gamma(\alpha+\frac{1}{2})^2}
		-15
		+
		4\sqrt{3}
		\frac{\Gamma(\alpha+\frac{1}{3})
			\Gamma(\alpha+\frac{2}{3})}
		{\Gamma(\alpha+\frac{1}{2})^2}
		+
		\frac{4\sqrt{\pi}\Gamma(\alpha)}
		{\Gamma(\alpha+\frac{1}{2})}
		\left(
		2I_{1/3}(\alpha,2\alpha)-1
		\right)
		-\frac{2}{\alpha}\,.
	\end{align*}
	A general parameter $\beta$ cancels out in \eqref{avar}. The result hence transfers to $(\epsilon_i)$ and by Lemma \ref{lemma} further to the estimator \eqref{mm} when applied to $(Y_i)$ from \eqref{lomn}.
	\end{proof}
	\begin{figure}[b]
		\centering
		\includegraphics[width=7.78cm]{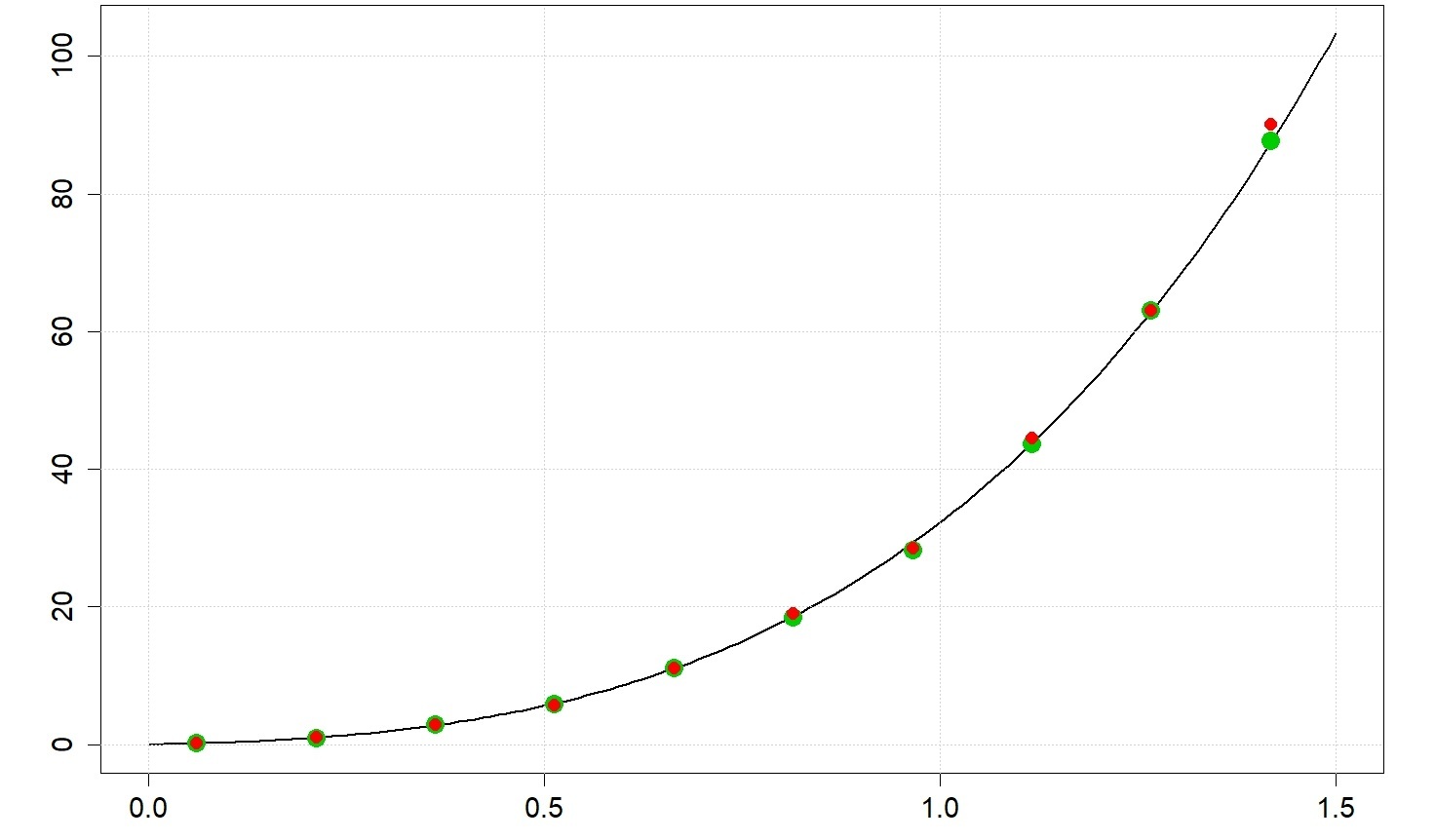}
		
		\caption{The line shows the $\operatorname{AVAR}$ function from \eqref{avar} as a function of $\alpha$. The red and green points compare to Monte Carlo empirical variances of $\hat\alpha$ with sample sizes $n=10,000$ (red) and $n=50,000$ (green).}
		\label{fig:1}
	\end{figure}
	The $\operatorname{AVAR}$ does not depend on $\beta$. It is an increasing, continuous function illustrated in Figure \ref{fig:1}. Hence, plug-in of $\hat\alpha$ facilitates asymptotic confidence intervals by Slutsky's lemma. This allows moreover testing the null hypothesis $\alpha=1$ of standard LOMN. For i.i.d.\ observations from a Gamma difference distribution, the $\operatorname{AVAR}$ would simplify, as the more involved covariance terms vanish, while asymptotic normality remains valid. 
	\section{Simulation Study}
	\label{sec:4}
	\begin{figure}[t]
		\centering
		\includegraphics[width=6cm]{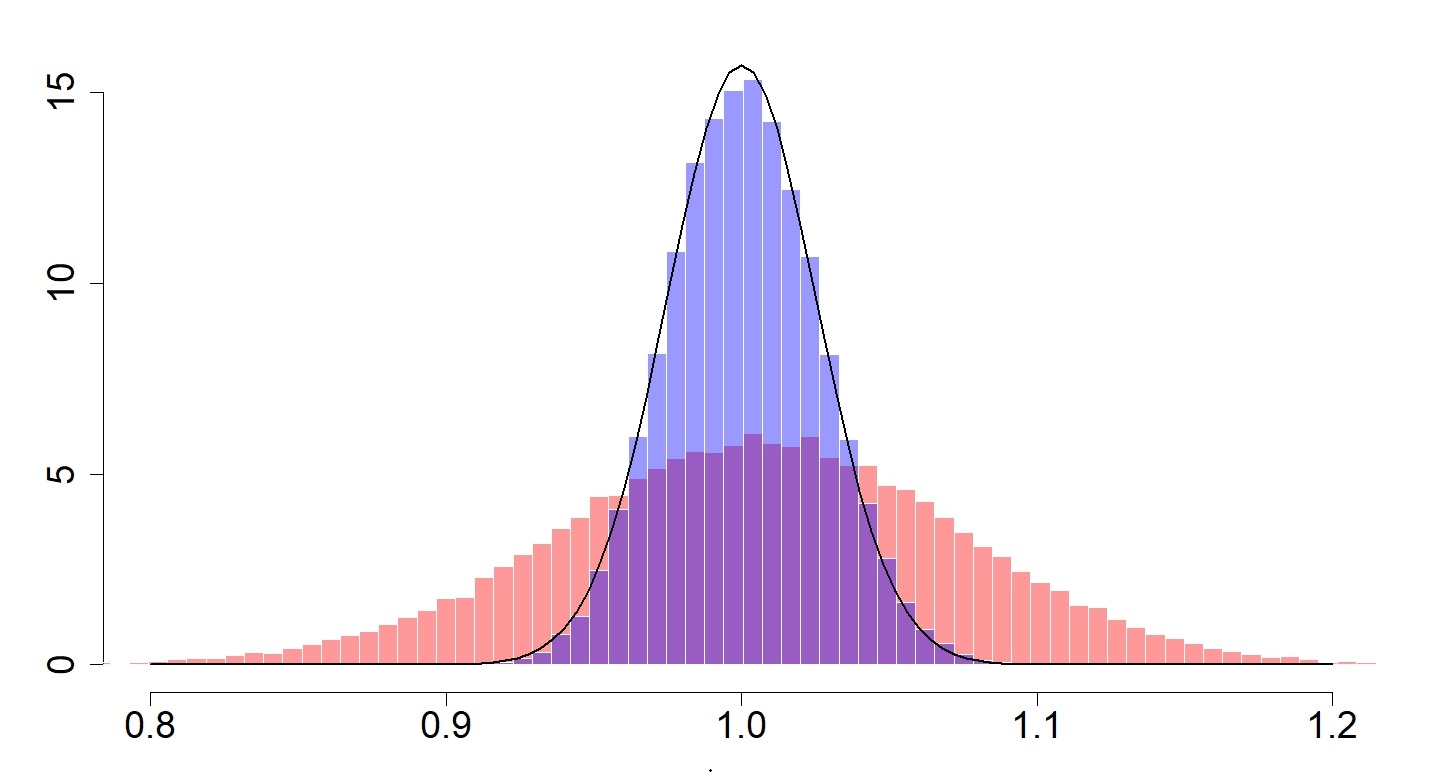}
		\hspace{0.5cm}
		\includegraphics[width=6cm]{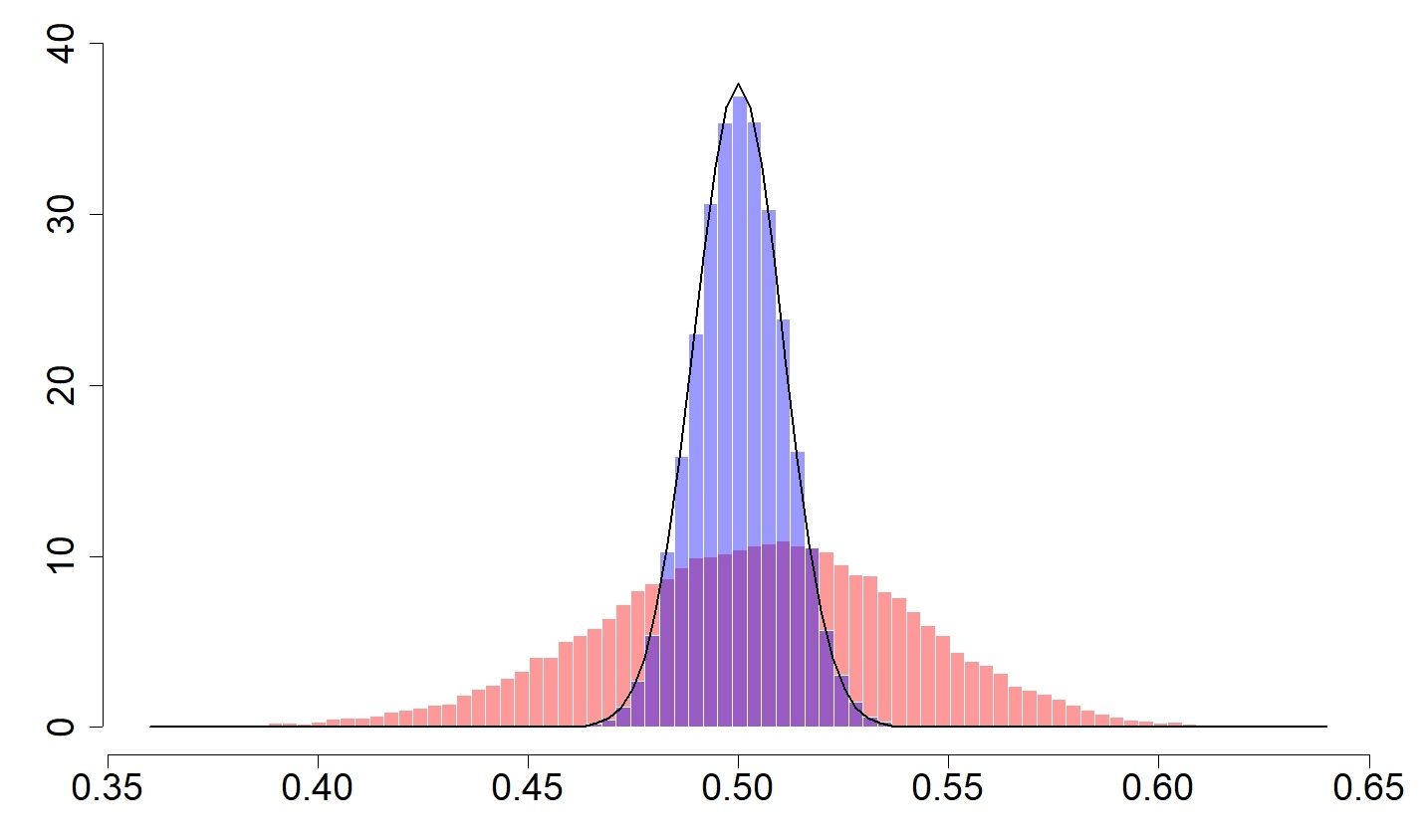}
		
		\caption{Histograms of $\hat{\alpha}$ for $\alpha=1$ (left) and $\alpha=1/2$ (right). The solid line shows the density of the asymptotic normal distribution. For comparison, histograms of the simpler moment estimators based on \eqref{moments} with $p=2$ and $p=4$ are included, exhibiting larger variances.}
		\label{fig:2}
	\end{figure}
	We conduct Monte Carlo simulations and generate synthetic data \(Y_i = X_{i/n} + \epsilon_i\), \(0 \le i \le n\), where \(\epsilon_i \overset{\text{i.i.d.}}{\sim} \Gamma(\alpha,\beta)\) with \(\beta = 2\) and \(\alpha \in \{1, 1/2\}\). We consider a simple semimartingale \(X_t = X_0 + 5 B_t\), with a standard Brownian motion $(B_t)$, where the initial value \(X_0\) drops out in the increments. More realistic specifications with stochastic volatility, leverage effects, and drift do not materially affect the results, since our target of inference is \((\epsilon_i)\) rather than \((X_t)\), so that only the noise-to-signal ratio is relevant. All results are based on \(50{,}000\) Monte Carlo replications. Code to reproduce all simulation results is available in a GitHub repository.\,\footnotemark It includes the numerical computation of the $\operatorname{AVAR}$ function used to generate Figure \ref{fig:1} as well as the numerical computation of the estimators. The plotted points in Figure \ref{fig:1} provide a comparison with empirical variances obtained from these Monte Carlo simulations of $\hat{\alpha}$, confirming good finite-sample agreement between our theoretical expression for $\operatorname{AVAR}$ and the empirical variances.
	
	The histograms in Figure \ref{fig:2} show that the empirical distributions of Monte Carlo samples of $\hat{\alpha}$ are well approximated by their asymptotic normal distributions. They also include a comparison with the method-of-moments estimator based on \eqref{moments} with $p=2$ and $p=4$, illustrating that the variance of our proposed estimator is considerably smaller. We use a sample size of \(n = 50{,}000\), which is realistic in view of the large number of quotes available for liquid stocks reported in Table~\ref{table}. 
	\section{Empirical Evidence}
	\label{sec:5}
	\footnotetext{\url{https://github.com/adriangrueber/LOMNgamma}}We consider NASDAQ limit order book data from the LOBSTER database\footnote{\url{https://lobsterdata.com}}, which has been used in several recent research papers including \cite{jumps} and \cite{MMN2}. The data has at least millisecond precision and no data cleaning procedures need to be performed before applying our statistics. As is standard for financial data, we consider log-prices as our observations $Y_i$.
	
	\begin{figure}[t]
		\begin{framed}
			~\includegraphics[width=0.3\textwidth]{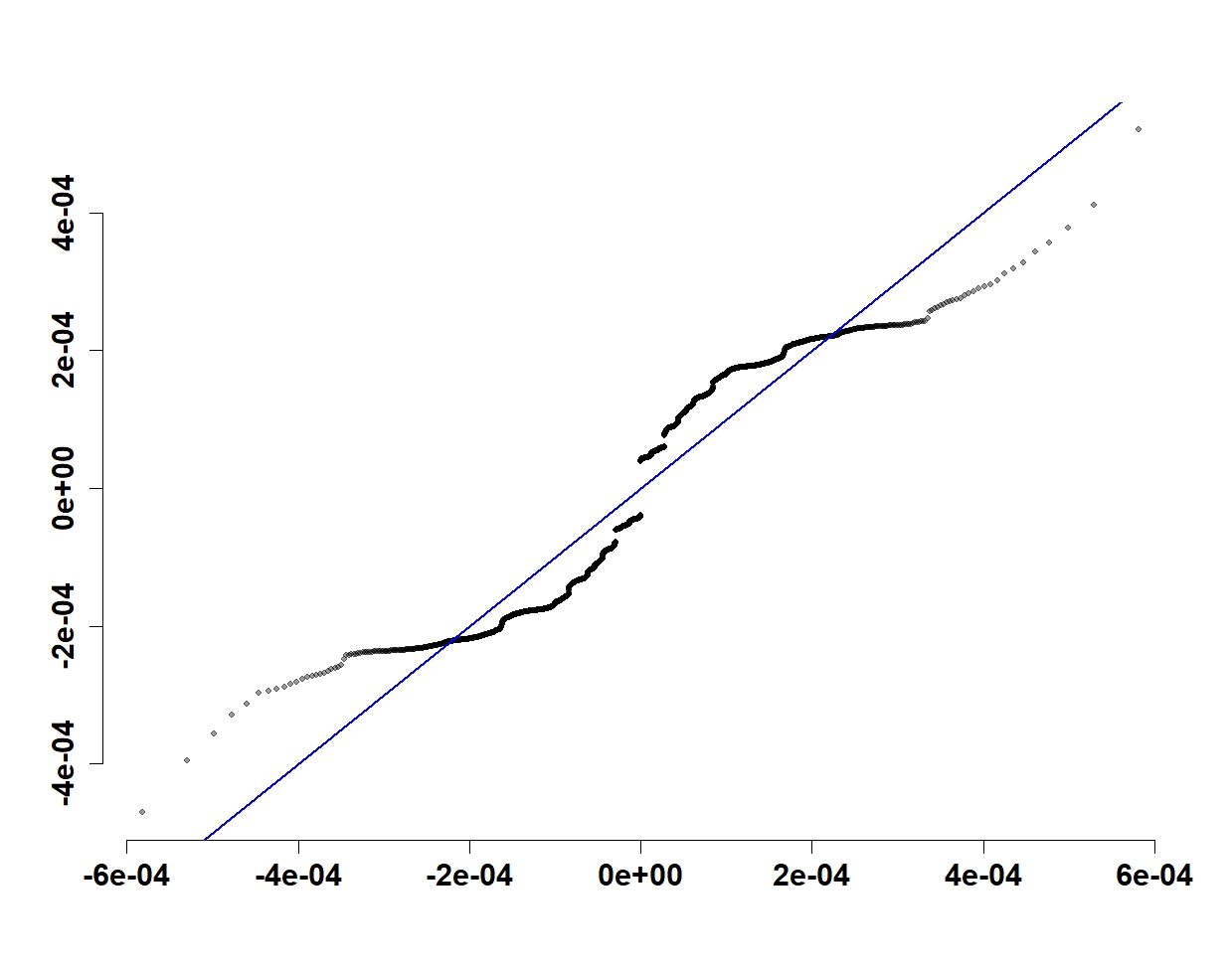} ~\includegraphics[width=0.3\textwidth]{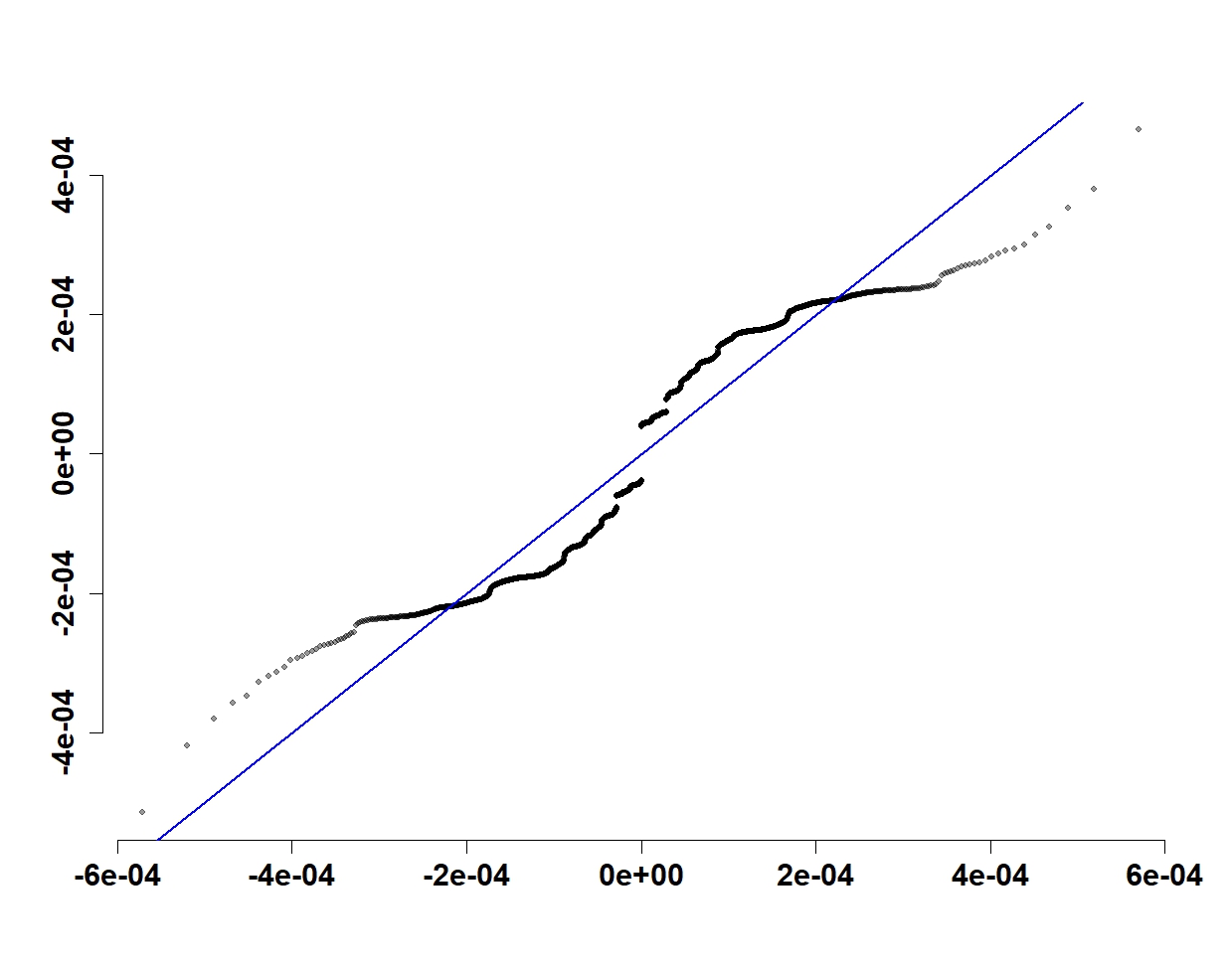}~\includegraphics[width=0.3\textwidth]{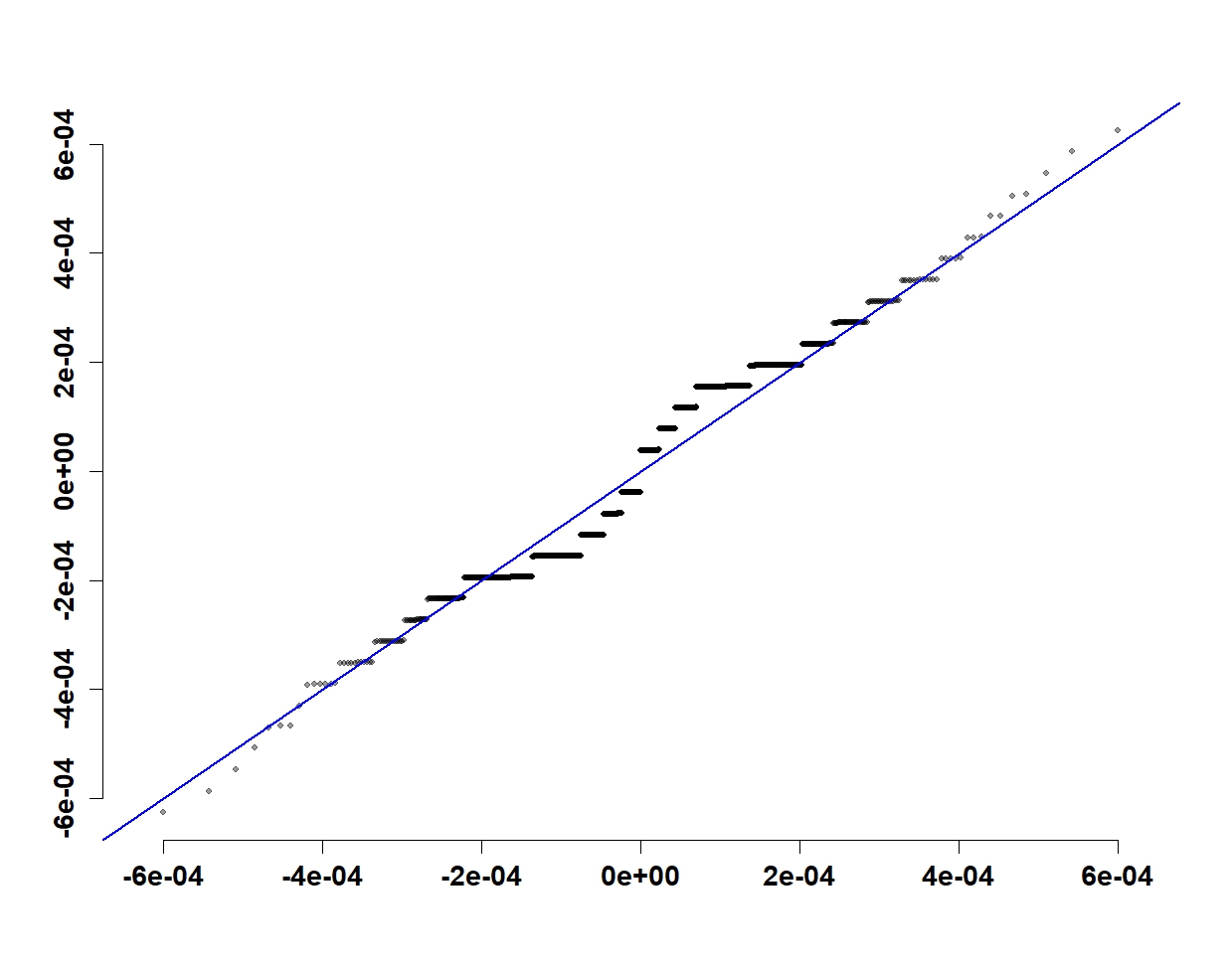}
		\end{framed}
		\caption{QQ-plots of empirical quantiles of $(Y_i-Y_{i-1})$ against quantiles of $\Gamma(\hat\alpha,\hat\beta)$ based on ask quotes (left) and bid quotes (middle) over the year 2024 and ask quotes over one day 2024-12-24 (right).\\ The plot is for an equally spaced grid of quantile levels $k/10000, k=1,\ldots,9999$.}
		\label{fig:3} 
	\end{figure}
	The QQ-plots in Figure \ref{fig:3} confirm overall adequate fits of the high-frequency log-returns of best ask and best bid quotes for the Apple stock (AAPL) by the Gamma difference distribution. We consider all quote revisions, excluding zero returns. The left panel uses best ask quotes over the full year 2024, while the middle panel is for best bid quotes over the same period. The right panel presents the fit for a single trading day, 2024-12-24. While such plots admittedly vary across individual days, they typically show better fits than those based on the full-year data. The step-like behaviour in the centre of the right plot is expected, as high-frequency log-returns at very fine time resolutions move on a discrete grid determined by the minimum tick size. This does not undermine the usefulness of models based on absolutely continuous distributions for inference on volatility and jumps of $(X_t)$, analogously to the MMN framework. Due to this effect and since the QQ-plots are based on a uniform grid of probability levels, multiple quantiles near the centre coincide. Overall, the symmetry, the bulk of the distribution, and the extreme quantiles are well captured by the two-parameter Gamma family. Results for other stocks are qualitatively similar.
		
	\begin{figure}[t]
		\includegraphics[width=\textwidth]{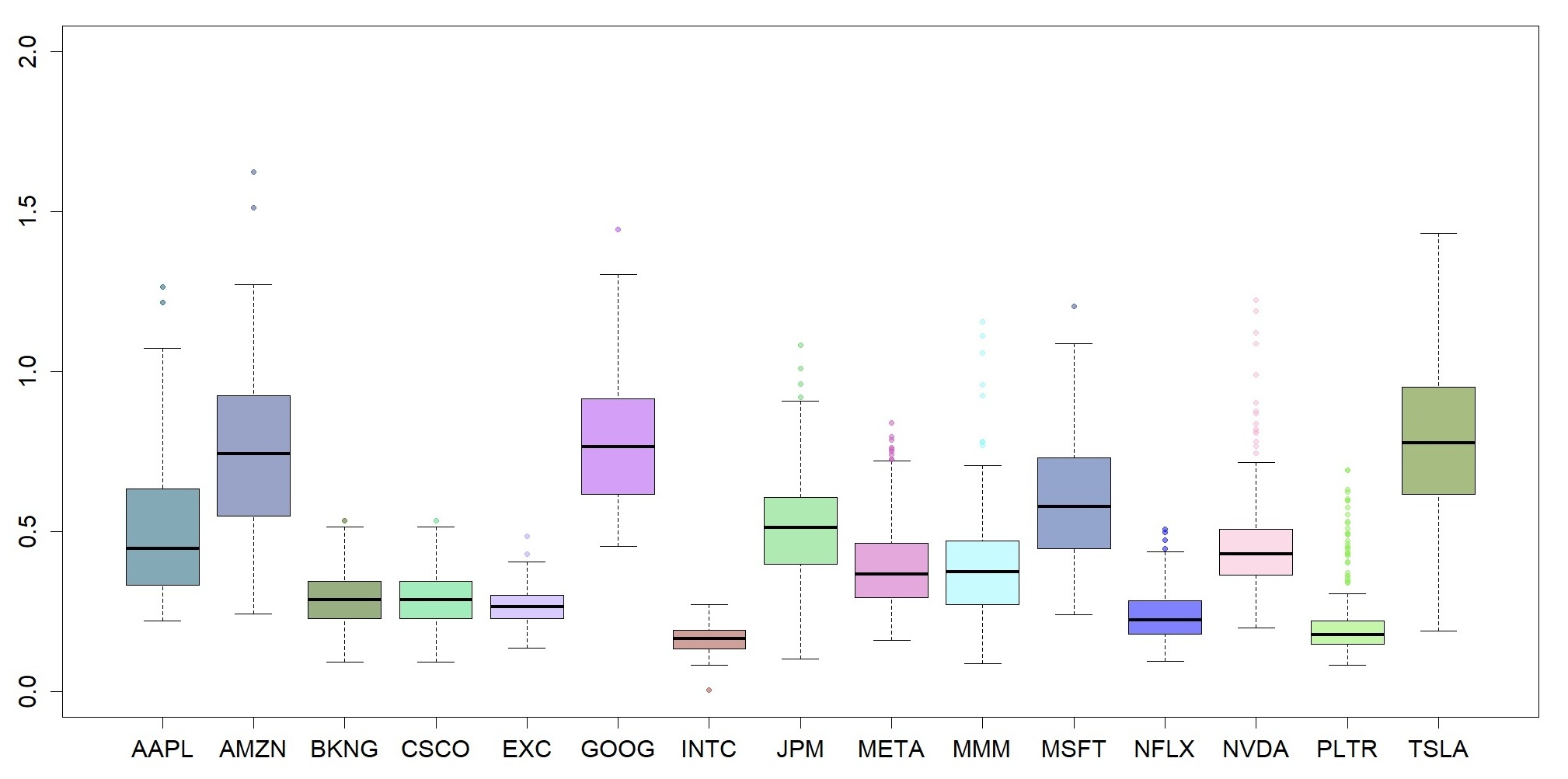}
		\caption{Boxplots of daily estimates $\hat\alpha$ over the year 2024 for 15 stocks.}
		\label{fig:4}
	\end{figure}
\renewcommand*{\arraystretch}{1.1}
\begin{table}[t]
	\centering
	\caption{Liquidity compared to estimated noise moments across 15 stocks.}
	\label{table}
	\resizebox{\textwidth}{!}{
		\begin{tabular}{|c|c c c c c c c c|}
			\hline
			Stock & AAPL & AMZN & BKNG & CSCO & EXC & GOOG & INTC & JPM \\
			\hline
			Avg.\ no.\ daily transactions & 108556 & 88386 & 2495 & 19487 & 9104 & 69258 & 37216 & 16233 \\
			
			Avg.\ no.\ daily subm.\ ask quotes & 404623 & 351833 & 7403 & 75712 & 43199 & 488848 & 159259 & 71978 \\
			
			Avg.\ no.\ daily subm.\ bid quotes & 411083 & 355412 & 7856 & 75420 & 43647 & 481941 & 159633 & 70699 \\
			
			Estimated first moment $\mathds{E}[\epsilon_i]$ & 0.0399 & 0.0606 & 0.1789 & 0.0794 & 0.1229 & 0.0701 & 0.0906 & 0.0666 \\
			\hline
			Stock & META & MMM & MSFT & NFLX & NVDA & PLTR & TSLA & \\
			\hline
			Avg.\ no.\ daily transactions & 34515 & 8320 & 55730 & 13163 & 199649 & 32129 & 155269 & \\
			
			Avg.\ no.\ daily subm.\ ask quotes & 84152 & 26393 & 204009 & 32764 & 533923 & 173343 & 409850 & \\
			
			Avg.\ no.\ daily subm.\ bid quotes & 83873 & 25774 & 214403 & 33371 & 538996 & 170223 & 423591 & \\
			
			Estimated first moment $\mathds{E}[\epsilon_i]$ & 0.0582 & 0.1229 & 0.0327 & 0.0809 & 0.0594 & 0.0974 & 0.0668 & \\
			\hline
		\end{tabular}
	}
\end{table}
\renewcommand*{\arraystretch}{1.0}
	
	The boxplots in Figure \ref{fig:4} are based on daily estimates of $\hat\alpha$ over the full year 2024 across 15 stocks, including the so-called \emph{Magnificent 7}--AAPL, AMZN, GOOG, META, MSFT, NVDA and TSLA--as well as eight additional blue-chip stocks from different sectors with varying liquidity. The variation in the estimates, as reflected by the box lengths, is driven less by the variance of $\hat\alpha$ and more by time-varying noise tail behaviour. Nevertheless, the estimates are remarkably stable for many stocks. The main empirical insights are:
	\begin{itemize}
	\item The noise tail parameter is asset-specific, since several estimates differ significantly across assets.
	\item Most noise tail parameters are significantly below the standard value of 1, suggesting that volatility and jump estimation can be performed even more efficiently than in \cite{BJR}, \cite{spot} and \cite{jumps}.
	\end{itemize}
	We investigate whether the empirical relations observed within the MMN framework, namely that more liquid stocks tend to exhibit lower noise and lower noise-to-signal ratios as shown in \cite{MMN}, have corresponding effects in the LOMN framework. Thereto, we relate liquidity measures, i.e., the number of transactions and order submissions, to the estimates of $\alpha$ shown in Figure \ref{fig:4} and to noise moments that depend on both parameters, $\alpha$ and $\beta$. Results are summarized in Table~\ref{table}. 
	
	The estimated first noise moments are comparatively small for the Magnificent 7, suggesting that higher liquidity may be associated with lower noise also in the LOMN framework. At the same time, some stocks with moderate liquidity exhibit relatively small estimated noise tail parameters in Figure \ref{fig:4}, indicating that fast rates for inference on the efficient price remain attainable. Future empirical work should therefore deepen the analysis of how one-sided noise characteristics relate to other asset-specific quantities.
	
	
	
	\section{Conclusion}
	We introduce a parametric framework for inferring a noise tail parameter of one-sided noise that captures the microstructure of limit order quotes. Our empirical results demonstrate the accuracy of the model and statistical methods and reveal asset-specific noise tail parameters. These are typically below 1, thereby enabling fast rates of inference on the efficient price and its characteristics.
	
	The implications for volatility and jump estimation mentioned in the introduction remain to be established in future work. This includes adaptive inference on the efficient price with pre-estimated noise tail parameter. Nonparametric methods for inferring the noise tail parameter are challenging but also of great interest, and their empirical results should be compared with the parametric ones presented in this paper. 

	\clearpage
	
	\bibliographystyle{apalike}
	\bibliography{references}
	\addcontentsline{toc}{section}{References}

\end{document}